\newtheorem{thm}{Theorem}[section]
\newtheorem*{thm*}{Theorem}
\newtheorem{lemma}[thm]{Lemma}
\newtheorem{cor}[thm]{Corollary}
\newtheorem*{conj*}{Conjecture}
\theoremstyle{definition}
\theoremstyle{remark}
\numberwithin{equation}{section}
\newcommand*\wrapletters[1]{\wr@pletters#1\@nil}
\def\wr@pletters#1#2\@nil{#1\allowbreak\if&#2&\else\wr@pletters#2\@nil\fi}
\def\alp{{\alpha}} 
\def\bet{{\beta}}
\def\lam{{\lambda}}
\def\le{\leqslant} \def\ge{\geqslant}
\def \bC {\mathbb C}
\def \bN {\mathbb N}
\def \bQ {\mathbb Q}
\def \bR {\mathbb R}
\def \bZ {\mathbb Z}
\def \Nm {\mathrm{Nm}}
\def \deg {\mathrm{deg}}
\begin{document}
\title[Rationality and power]{Rationality and power}
\author[Sam Chow]{Sam Chow}
\address{School of Mathematics, University of Bristol, University Walk, Clifton, Bristol BS8 1TW, United Kingdom}
\email{Sam.Chow@bristol.ac.uk}

\author[Bin Wei]{Bin Wei}
\address{School of Mathematics, Shandong University, 250100, Jinan, P. R. China}
\email{bwei.sdu@gmail.com}

\subjclass[2010]{11A99, 11D61, 11J72, 11J81}
\keywords{Elementary number theory, exponential equations, irrationality, transcendence}
\thanks{The authors thank Trevor Wooley for useful suggestions, and Adam Morgan for a historical remark. The second author is grateful to the China Scholarship Council (CSC) for supporting his studies in the United Kingdom.}
\date{}
\begin{abstract} 
We produce an infinite family of transcendental numbers which, when raised to their own power, become rational. We extend the method, to investigate positive rational solutions to the equation $x^x = \alp$, where $\alp$ is a fixed algebraic number. We then explore the consequences of $x^{P(x)}$ being rational, if $x$ is rational and $P(x)$ is a fixed integer polynomial.
\end{abstract}
\maketitle

\section{Introduction} 
\label{intro}

A famous application of the \emph{principle of excluded middle} quickly demonstrates the existence of irrational real numbers $c$ and $d$ such that $c^d$ is rational. Indeed, if $A = \sqrt{2}^{\sqrt{2}}$ is irrational then we may choose $c = A$ and $d= \sqrt{2}$, while if $A \in \bQ$ then we may choose $c=d=\sqrt{2}$. It may interest the reader to note that $A$ is transcendental, by the Gelfond-Schneider theorem \cite[Theorem 10.1]{Niv2005}.

\begin{thm*} [Gelfond, Schneider]
Let $a$ and $b$ be algebraic numbers with $a \notin \{0,1\}$ and $b \notin \bQ$. Then $a^b$ is transcendental.
\end{thm*}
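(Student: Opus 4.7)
The plan is to argue by contradiction, following the classical transcendence method developed by Gelfond and Schneider. Suppose $a^b$ is algebraic, set $\gam := a^b$, and fix a branch of $\log a$, which is nonzero since $a \notin \{0,1\}$. Because $b$ is irrational, $\log a$ and $b \log a = \log \gam$ are linearly independent over $\bQ$; this is the structural fact that drives the entire argument.

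The first step is to construct, via Siegel's lemma, an auxiliary entire function
$$F(z) = \sum_{i=0}^{L-1} \sum_{j=0}^{L-1} c_{ij}\, a^{iz} \gam^{jz} = \sum_{i,j} c_{ij}\, e^{(i + jb) z \log a},$$
whose coefficients $c_{ij}$ are rational integers of controlled height, not all zero, chosen so that $F$ vanishes at $z = 1, 2, \ldots, N$. This linear-algebra step succeeds because the system has $L^2$ unknowns and only $N$ homogeneous conditions, and the values $F(n)$ lie in the number field $K = \bQ(a, \gam)$ with heights bounded polynomially in $L$.

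The core of the argument is then a two-sided estimate on $F(M)$, where $M$ is the first integer at which $F$ is nonzero. An analytic upper bound comes from the maximum modulus principle on a disk of large radius $R$: dividing $F$ by $\prod_{n \le N}(z - n)$ and invoking a Schwarz-lemma-type inequality yields $|F(M)| \le e^{-cN \log N}$ for suitable parameters. An arithmetic lower bound, a Liouville-style inequality, exploits that $F(M)$ is a nonzero element of $K$ of bounded degree and height, giving $|F(M)| \ge e^{-c' N \log L}$. With $L$ and $N$ balanced appropriately, these bounds are incompatible.

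The main obstacle, and the genuinely delicate part of the argument, is the zero-propagation step: a single application of the two-sided estimate does not close the argument, and one must iterate, successively deducing that $F$ vanishes on ever-longer initial segments of $\bZ$ while carefully tracking how heights and growth rates evolve. The irrationality of $b$ is indispensable throughout---if the exponents $\{i + jb : 0 \le i,j < L\}$ collided, or if $F$ vanished identically, one would extract a nontrivial $\bQ$-linear relation between $1$ and $b$, contradicting $b \notin \bQ$. Packaging all of this carefully is precisely what requires a textbook treatment such as \cite[Theorem 10.1]{Niv2005}, which we invoke as a black box.
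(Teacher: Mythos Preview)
The paper does not prove this theorem at all; it merely states it as a known result and cites \cite[Theorem 10.1]{Niv2005}. Your proposal ends in exactly the same place---invoking that reference as a black box---so in that sense the two agree. The sketch you give beforehand (auxiliary function via Siegel's lemma, Schwarz-lemma upper bound, Liouville-type lower bound, zero propagation) is a faithful outline of the classical argument and is correct in spirit, though of course it is not a self-contained proof. Since the paper itself offers no proof, your write-up is strictly more informative than what appears there, while remaining consistent with the paper's treatment.
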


Does there exist $x \in \bR \setminus \bQ$ such that $x^x \in \bQ$? As irrational powers are hard to understand, it is not immediately apparent that this problem admits a short, elementary solution. One can overcome the difficulty by using the value of $x^x$ to study $x$. In fact the equation
\[ x^x = 2 \]
has an irrational solution, since it has a real solution but no rational solution. We shall see that if $x \in \bQ_{>0}$ and $x^x \in \bQ$ then $x \in \bZ$. Consequently, any rational number $q > 1$ that is not of the shape $n^n$ $(n \in \bN)$ corresponds to an irrational number $x > 1$ such that $x^x = q$. By the Gelfond-Schneider theorem, any such $x$ is an fact transcendental. In particular, there are infinitely many transcendental real numbers $x > 1$ such that $x^x \in \bQ$.

All of the above is fairly straightforward, with the exception of the Gelfond-Schneider theorem, and these ideas have been well probed by online denizens. We generalise by considering positive rational solutions to the equation
\begin{equation} \label{MainEq}
x^x = \alp,
\end{equation}
where $\alp$ is a fixed algebraic number. We show that if $\alp$ is an algebraic integer then $x \in \bZ$. If $\alp$ is not an algebraic integer, then we are nonetheless able to bound the denominator of $x$ in terms of the degree of $\alp$. Our results provide an algorithm to determine all positive rational solutions to \eqref{MainEq}. We then show that if \eqref{MainEq} has a positive rational solution then the minimal polynomial of $\alp$ has the shape $sX^d - r$ for some $r,s,d \in \bN$. This leads to an alternate solution to the problem.

Eq. \eqref{MainEq} may have more than one positive rational solution, for instance
\[
(1/2)^{1/2} = (1/4)^{1/4}.
\]
It can have at most two solutions, however, since $x \mapsto x^x$ is strictly decreasing on $(0, 1/e]$ and strictly increasing on $[1/e, \infty)$. All positive rational solutions to the equation
\begin{equation} \label{CoolEq}
x^x = y^y
\end{equation}
are given by
\[
x = \Bigl( \frac m{m+1} \Bigr)^m, \qquad y = \Bigl( \frac m{m+1} \Bigr)^{m+1} \qquad (m \in \bN).
\]
This follows easily from a classical result describing all solutions to 
\begin{equation} \label{classical}
x^y = y^x. 
\end{equation}
Indeed, we may rearrange to see that the positive rational solutions to \eqref{CoolEq} are precisely the reciprocals of the positive rational solutions to \eqref{classical}. 

Bennett and Reznick \cite{BR2004} provide an impressive account of the history of Eq. \eqref{classical}, which goes back to a letter from Bernoulli to Goldbach \cite{Ber1843}. Bennett and Reznick credit Flechsenhaar \cite{Fle1911} as the first author to determine all positive rational solutions with proof. An industry has since developed to study equations of this type \cite{BR2004, Hau1961, Hur1967, Lau1995, Sat1972, Sve1990}. Our only addition to the historical discussion in \cite{BR2004} is to remark that Ko and Sun worked on similar problems (see \cite[pp. 113--114]{Sun1988}).

Eq. \eqref{MainEq} may be considered as a variant of the problem of investigating positive rational solutions $x$ to
$P(x^x) = 0$, for a given polynomial $P \in \bZ[X]$. Our methods allow us to draw analogous conclusions when $x^{P(x)} \in \bQ$. If $P$ is monic then $x \in \bZ$, while if $P$ is not monic then we are nonetheless able to bound the denominator of $x$ in terms of the leading coefficient of $P$.

\section{Details}
\label{details}
If $p$ is prime and $n \in \bN$, we shall write $p^a || n$ if $p^a$ divides $n$ but $p^{a+1}$ does not divide $n$. We write $[x]$ for the integer part of $x$. We denote by $[b_1, \ldots, b_s]$ the lowest common multiple of $b_1, \ldots, b_s \in \bN$. For $x \in \bR$, put $e(x) := e^{2 \pi i x}$. For $a,b \in \bZ^2 \setminus \{(0,0)\}$, we write $(a,b)$ for the greatest common divisor of $a$ and $b$.

The following lemma is crucial to our analysis.

\begin{lemma} \label{GoodLemma} Let $x,y,a,b \in \bN$ with $(a,b) = 1$, and suppose $x^a = y^b$. Then there exists $\lam \in \bN$ such that $x = \lam^b$ and $y = \lam^a$.
\end{lemma}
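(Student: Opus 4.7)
\medskip

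\noindent\textbf{Proof proposal.}
The natural strategy is to work prime-by-prime via the fundamental theorem of arithmetic. For each prime $p$, write $\alpha_p := v_p(x)$ and $\beta_p := v_p(y)$ for the $p$-adic valuations, so that $x = \prod_p p^{\alpha_p}$ and $y = \prod_p p^{\beta_p}$ are finite products. Comparing $p$-adic valuations in the hypothesis $x^a = y^b$ yields the numerical identity
\[
a\alpha_p = b\beta_p \qquad (p \text{ prime}).
\]

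Next I would exploit the coprimality assumption. From $a \alpha_p = b \beta_p$ and $(a,b) = 1$, Euclid's lemma forces $b \mid \alpha_p$, and then cancelling $b$ gives $\beta_p = a (\alpha_p/b)$; in other words, there is a nonnegative integer $\gamma_p$ (which vanishes for all but finitely many $p$) satisfying
\[
\alpha_p = b\gamma_p, \qquad \beta_p = a\gamma_p.
\]

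The candidate is then $\lambda := \prod_p p^{\gamma_p} \in \bN$, a finite product because only the primes dividing $xy$ contribute. Direct substitution gives
\[
\lambda^b = \prod_p p^{b\gamma_p} = \prod_p p^{\alpha_p} = x, \qquad \lambda^a = \prod_p p^{a\gamma_p} = \prod_p p^{\beta_p} = y,
\]
which completes the argument. There is no real obstacle here: the crux is simply the passage from the multiplicative equation $x^a = y^b$ to the additive equations $a\alpha_p = b\beta_p$, after which coprimality of $a$ and $b$ does all the work.
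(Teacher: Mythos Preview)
Your proof is correct and follows essentially the same route as the paper: compare $p$-adic valuations prime by prime, use $(a,b)=1$ to force $b \mid v_p(x)$, and assemble $\lambda$ from the quotients. The only cosmetic difference is that the paper first remarks that $x$ and $y$ share the same prime support and writes the key relation as a ratio $\alpha/\beta = a/b$, whereas you work directly with the additive identity $a\,v_p(x) = b\,v_p(y)$ and allow zero valuations throughout; this makes your version marginally cleaner but not substantively different.
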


\begin{proof} Note that $x$ and $y$ have the same prime factors. To each such factor $p$ we assign positive integers $\alp = \alp_p$, $\beta = \beta_p$ and $X_p$ as follows. Let $p^\bet \| x$ and $p^\alp \|y$. Then $\alp/\bet = a/b$, so $b | \bet$. Write $\bet = b X_p$, and note that $\alp = aX_p$.

Put $\lam = \prod_{p|y} p^{X_p}$. Now
\[
x = \prod_{p|x} p^{\bet_p} = \prod_{p|x} p^{bX_p} = \lam^b
\]
and
\[
y = \prod_{p|y} p^{\alp_p} = \prod_{p|y} p^{aX_p} = \lam^a.
\]
\end{proof}

Our initial assertions now follow easily.

\begin{thm} \label{basic}
Let $x$ be a positive rational number, and suppose $x^x \in \bQ$. Then $x \in \bZ$.
\end{thm}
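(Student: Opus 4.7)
The plan is to exploit Lemma \ref{GoodLemma} after rewriting $x^x \in \bQ$ as an equality between two $q$-th powers of integers, where $q$ is the denominator of $x$ in lowest terms. The hoped-for conclusion is that the denominator $q$ is a $q$-th power of a positive integer, which then quickly forces $q = 1$.

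First I would write $x = p/q$ with $p,q \in \bN$ and $(p,q) = 1$, and set $x^x = r/s$ with $r, s \in \bN$ and $(r,s) = 1$. Raising the identity $(p/q)^{p/q} = r/s$ to the $q$-th power clears the fractional exponent and yields
\[
p^p s^q = r^q q^p.
\]
Next I would isolate the two sides using coprimality. Since $(p,q) = 1$ we have $(p^p, q^p) = 1$, and since $(r,s)=1$ we have $(r^q, s^q) = 1$. The relation above then forces $p^p \mid r^q$ and $r^q \mid p^p$, hence $r^q = p^p$, and symmetrically $s^q = q^p$.

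Now comes the decisive step: applying Lemma \ref{GoodLemma} to $s^q = q^p$ with $(p,q) = 1$. The lemma supplies $\mu \in \bN$ with $q = \mu^q$ and $s = \mu^p$. The equation $q = \mu^q$ leaves no room to manoeuvre: for any $\mu \ge 2$ and any $q \ge 1$ one has $\mu^q \ge 2^q > q$, so we must have $\mu = 1$, and therefore $q = 1$, i.e.\ $x \in \bZ$.

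I expect no real obstacle here; the only point that needs a little care is the coprimality bookkeeping in step two, to justify cleanly that $r^q = p^p$ and $s^q = q^p$ (one could equivalently apply Lemma \ref{GoodLemma} to $r^q = p^p$ as well, but only the factorisation of $q^p$ is needed to pin down that $q = 1$).
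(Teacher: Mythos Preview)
Your proof is correct and follows essentially the same route as the paper: write $x$ and $x^x$ in lowest terms, use coprimality to separate numerator and denominator equations, apply Lemma~\ref{GoodLemma} to the denominator equation $q^p = s^q$, and conclude $q = \mu^q$ forces $q=1$. The only cosmetic difference is that the paper reads off $b^a = n^b$ directly from the equality of two fractions in lowest terms, whereas you cross-multiply and argue the divisibilities explicitly.
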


\begin{proof} Put $x = a/b$, for relatively prime positive integers $a$ and $b$. Let $x^x = m/n$, where $m$ and $n$ are relatively prime positive integers. Now 
\[
\frac{a^a}{b^a} = \frac{m^b}{n^b}.
\]
As $(a,b) = (m,n) = 1$, we must have $b^a = n^b$. Thus, by Lemma \ref{GoodLemma}, there exists $\lam \in \bN$ such that $b = \lam^b$. This is only possible if $b = \lam = 1$.
\end{proof}

\begin{cor}
Let $q > 1$ be a rational number such that $n^n = q$ has no solution $n \in \bN$. Then there exists a transcendental real number $x$ such that $x^x = q$.
\end{cor}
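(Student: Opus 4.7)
The plan is to produce $x$ by the intermediate value theorem and then rule out both rationality and algebraicity in turn.

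First I would observe that the function $f(t) = t^t = e^{t \log t}$ is continuous on $(0, \infty)$, satisfies $f(1) = 1$, and tends to $+\infty$ as $t \to \infty$. Since $q > 1$, the intermediate value theorem furnishes a real number $x > 1$ with $x^x = q$. (Strict monotonicity of $f$ on $[1, \infty)$ makes this $x$ unique, though uniqueness is not needed.)

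Next I would show $x$ is irrational. By Theorem \ref{basic}, if $x$ were a positive rational with $x^x \in \bQ$, then $x$ would be a positive integer; but then $q = x^x$ would be of the form $n^n$ with $n \in \bN$, contradicting the hypothesis on $q$.

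Finally I would show $x$ is transcendental by invoking the Gelfond--Schneider theorem. Suppose, for a contradiction, that $x$ is algebraic. Since $x > 1$, certainly $x \notin \{0,1\}$; and by the previous step $x \notin \bQ$. Gelfond--Schneider then gives that $x^x$ is transcendental, contradicting $x^x = q \in \bQ$. Hence $x$ is transcendental.

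No serious obstacle arises here: the main work is done by Theorem \ref{basic} and Gelfond--Schneider, and the existence of $x$ is a one-line application of the intermediate value theorem. The only care needed is to verify the hypotheses of Gelfond--Schneider, which is immediate since $x > 1$ and $x$ is already known to be irrational.
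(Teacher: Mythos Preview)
Your proof is correct and follows essentially the same approach as the paper: existence via the intermediate value theorem, irrationality via Theorem \ref{basic}, and transcendence via Gelfond--Schneider. The only difference is that you spell out the verification of the Gelfond--Schneider hypotheses in slightly more detail.
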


\begin{proof} By the intermediate value theorem, there exists a real number $x > 1$ such that $x^x = q$. The hypotheses on $q$ ensure that $x$ is not an integer, so Theorem \ref{basic} gives $x \notin \bQ$. Now the Gelfond-Schneider theorem implies that $x$ is transcendental.
\end{proof}

We turn our attention to \eqref{MainEq}. Note that if $\alp$ is not a positive real algebraic number then \eqref{MainEq} will have no solutions $x \in \bQ_{>0}$.

\begin{thm} \label{algebraic} Let $\alp$ be an algebraic number of degree $d$. Suppose \eqref{MainEq} has a solution $x = a/b$, where $a,b \in \bN$ and $(a,b) = 1$. If $\alp$ is an algebraic integer then $b=1$. Otherwise $b \ge 2$ and
\begin{equation} \label{bbound1}
\frac b{\log b} \le \frac d{\log 2}.
\end{equation}
\end{thm}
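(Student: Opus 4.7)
The plan is to study $p$-adic valuations of $\alp$ in the number field $L = \bQ(\alp)$. Starting from $x^x = \alp$ with $x = a/b$ and $(a,b) = 1$, I raise to the $b$-th power to get
\[
\alp^b = \frac{a^a}{b^a}.
\]
Since the right-hand side is rational, this pure equation pins down $w(\alp)$ for every extension $w$ of the $p$-adic valuation $v_p$ to $L$: using $(a,b) = 1$, one has $w(a) = 0$ and $w(b) = v_p(b)$, so $b \, w(\alp) = -a \, v_p(b)$, giving
\[
w(\alp) = -\frac{a \, v_p(b)}{b}.
\]

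If $p \mid b$, then this value is strictly negative, which contradicts $\alp$ being an algebraic integer (such an $\alp$ must satisfy $w(\alp) \ge 0$ at every non-archimedean $w$). Hence if $\alp$ is an algebraic integer, no prime divides $b$, so $b = 1$. This settles the first assertion; the trivial converse $b = 1 \Rightarrow \alp = a^a \in \bZ$ in particular shows that the hypothesis ``$\alp$ not an algebraic integer'' already forces $b \ge 2$.

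For the bound when $b \ge 2$, I would invoke the standard fact that $w(L^*) = (1/e)\bZ$, where $e = e(w | v_p)$ is the ramification index, and $e \le [L:\bQ] = d$. Requiring $-a \, v_p(b)/b \in (1/e)\bZ$ and using $(a,b) = 1$ yields $b \mid e \, v_p(b)$, so $e \, v_p(b) \ge b$ and therefore
\[
d \ge e \ge \frac{b}{v_p(b)} \ge \frac{b \log p}{\log b}
\]
for every prime $p \mid b$. Taking $p$ to be the smallest prime divisor of $b$ (which is $\ge 2$) delivers \eqref{bbound1}.

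The only place requiring any care is the valuation-theoretic bookkeeping: I should note that the formula for $w(\alp)$ is forced, not chosen, because $w(\alp^b)$ equals $v_p$ of a rational number and is thus independent of the choice of extension $w$; and I should invoke $\sum_i e_i f_i = d$ to bound the ramification index by the degree. Neither is a serious obstacle, so the argument will be short once these standard facts are marshalled.
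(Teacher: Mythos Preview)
Your argument is correct and takes a genuinely different route from the paper's. The paper applies the field norm $\Nm_{\bQ(\alp)/\bQ}$ to $(a/b)^a = \alp^b$, obtaining $(a/b)^{ad} = (r/s)^b$ with $r,s$ read off from the minimal polynomial; comparing denominators and invoking the elementary Lemma~\ref{GoodLemma} then yields $b^d = \lam^b$ and $s = \lam^a$ for some $\lam \in \bN$, from which both conclusions drop out. Your approach replaces the norm by a prime-by-prime analysis via extensions of $v_p$ to $\bQ(\alp)$, and replaces Lemma~\ref{GoodLemma} by the ramification bound $e \le d$ coming from $\sum_i e_i f_i = d$.

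Each route has its advantages. The paper's argument is entirely elementary---it needs no valuation theory, only unique factorisation in $\bZ$---and it extracts the exact identity $b^d = \lam^b$, $s = \lam^a$, which is later exploited to give an algorithm for solving \eqref{MainEq}. Your argument is shorter and more conceptual for a reader comfortable with ramification, and it in fact proves the marginally sharper inequality $d \ge b\log p/\log b$ for every prime $p \mid b$, not just $p = 2$. One small expository point: your claim ``$w(a) = 0$'' holds only for primes $p \mid b$ (so $p \nmid a$), which is the only case you use; it would read more cleanly to fix such a $p$ from the outset.
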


\begin{proof} From \eqref{MainEq} we have
\[
(a/b)^a = \alp^b.
\]
Applying the norm $\Nm_{\bQ(\alp)/\bQ}$ gives
\[
(a/b)^{ad} = (r/s)^b,
\]
where the minimal polynomial of $\alp$ over $\bZ$ has leading coefficient $s > 0$ and constant coefficient $\pm r$. As $(a,b) = (r,s) = 1$, we must have $b^{ad} = s^b$. Now by Lemma \ref{GoodLemma} there exists $\lam \in\bN$ such that $b^d = \lam^b$ and $s = \lam^a$. If $\alp$ is an algebraic integer then $s=1$, so $\lam = 1$ and hence $b=1$. 

If $\alp$ is not an algebraic integer then $s > 1$, so $\lam \ge 2$. We must also have $b \ge 2$, so
\[
\frac b{\log b} = \frac d{\log \lam} \le \frac d{\log 2}.
\]
\end{proof}

Note that \eqref{bbound1} implicitly bounds $b$ in terms of $d$. It is sharp, in some sense, because equality is attained when $x=1/2$. We can use \eqref{bbound1} to deduce an explicit but weaker bound.

\begin{lemma} \label{explicit} Let $b \ge 2$ and $d$ be integers satisfying \eqref{bbound1}. Then
\begin{equation} \label{bbound2}
b < 4d \log d.
\end{equation}
\end{lemma}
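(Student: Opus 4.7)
The plan is to argue by contradiction: assume $b \ge 4d\log d$ and use the monotonicity of $f(x) = x/\log x$ on $[e,\infty)$ to force an inequality between $\log d$ and $\log\log d$ that fails for every integer $d$ in the relevant range.

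First I will dispose of the small cases. The hypothesis \eqref{bbound1} combined with $b \ge 2$ already forces $d \ge 2$, since $b/\log b \ge 2/\log 2 > 1/\log 2$ rules out $d = 1$, so the lemma is vacuous otherwise. When $b = 2$ one has $4d\log d \ge 8\log 2 > 2$, so \eqref{bbound2} is immediate. It therefore suffices to handle $b \ge 3$ and $d \ge 2$.

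For such $b$ and $d$, both $b$ and $4d\log d$ lie in $[e, \infty)$ (the latter because $8\log 2 > e$), where $f(x) = x/\log x$ is strictly increasing. Thus if $b \ge 4d\log d$ then \eqref{bbound1} gives
\[
\frac{d}{\log 2} \;\ge\; \frac{b}{\log b} \;\ge\; \frac{4d\log d}{\log 4 + \log d + \log\log d},
\]
which rearranges to $(4\log 2 - 1)\log d \le \log 4 + \log\log d$. The remaining task, and the only place requiring any real work, is to show that this inequality fails for every integer $d \ge 2$.

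I expect to close the proof by setting $h(d) = (4\log 2 - 1)\log d - \log 4 - \log\log d$, checking directly that $h(2) > 0$ (numerically $h(2) \approx 0.21$), and computing
\[
h'(d) \;=\; \frac{1}{d}\Bigl( 4\log 2 - 1 - \frac{1}{\log d}\Bigr) > 0 \qquad (d \ge 2),
\]
where positivity uses the numerical fact that $4\log 2 - 1 \approx 1.77$ exceeds $1/\log 2 \approx 1.44$. Consequently $h(d) > 0$ throughout $d \ge 2$, contradicting the displayed inequality. The main obstacle is really just that $x/\log x$ is not monotone on all of $[2,\infty)$, which is why the case $b = 2$ must be peeled off before invoking monotonicity.
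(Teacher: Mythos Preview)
Your proof is correct and follows essentially the same route as the paper: assume $b \ge 4d\log d$, use monotonicity of $x/\log x$ on $[e,\infty)$, and derive a contradiction with \eqref{bbound1} via the (equivalent) inequality $d^{4\log 2} > 4d\log d$ for $d\ge 2$. One small simplification: you need not peel off $b=2$, since the contradiction hypothesis $b \ge 4d\log d \ge 8\log 2 > e$ already places $b$ in the monotonicity range.
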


\begin{proof} We note that $x \mapsto x/ \log x$ is increasing on $[e, \infty)$. Using this, we deduce that
\[
\frac1{\log2} < \frac3{\log3} \le \frac2{\log2} \le \frac{n}{\log n} \qquad (n = 4,5,\ldots)
\]
Now from \eqref{bbound1} we have
\[
d \ge \frac{b \log 2}{\log b} > 1.
\]
Assume for a contradiction that \eqref{bbound2} is false. Then $b \ge 4d \log d  > e$,
so
\[
\frac b{\log b} \ge \frac{4d \log d}{\log(4d \log d)} = \frac d{\log 2} \cdot \frac{\log (d^{4 \log 2})}{\log(4d \log d)} > \frac d{\log 2},
\]
contradicting \eqref{bbound1}. This contradiction establishes \eqref{bbound2}.
\end{proof}

Based on Theorems \ref{basic} and \ref{algebraic}, as well as Lemma \ref{explicit}, we now outline an algorithm to determine all positive rational solutions to \eqref{MainEq}. Let $\alp \in \bR_{>0}$ be an algebraic number of degree $d$.
\begin{enumerate}
\item For $x = 1,2, \ldots, N$, determine whether $x^x < \alp$, $x^x = \alp$ or $x^x > \alp$. Stop as soon as $x^x \ge \alp$. The number of calculations needed here is
\[
N \le \max(3, 1 + \log \alp).
\]
If $d = 1$ then stop. 
\item Put
\[
B = [4d \log d].
\]
For $b = 2,\ldots,B$ and $a =1,2,\ldots, Nb$, test $x=a/b$ in \eqref{MainEq}. The number of tests required is at most 
\[
NB^2 = O(d^2 (\log d)^2\log(2+\alp)),
\]
with an absolute implicit constant. 
\end{enumerate}

If we know the minimal polynomial of $\alp$ over $\bZ$, then a good alternative procedure exists. We shall show that if \eqref{MainEq} has a positive rational solution then this minimal polynomial has the shape
\begin{equation} \label{MinShape}
P(X) = sX^d - r,
\end{equation}
for some $r,s,d \in \bN$.

\begin{lemma} \label{min1} 
Let $p_1, \ldots, p_s$ be distinct primes, and let $(a_1,b_1), \ldots, (a_s,b_s)$ be pairs of relatively prime integers, where $a_i \ne 0$ and $b_i > 0$ $(1 \le i \le s)$. Put $L = [b_1, \ldots, b_s]$. Then
\[
Q(X) := X^L - p_1^{a_1L/b_1} \cdots p_s^{a_sL / b_s}
\]
is irreducible over $\bQ$.
\end{lemma}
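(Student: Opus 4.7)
The plan is to factor $Q$ over $\bC$ and extract an arithmetic obstruction from any putative rational factorisation. Set $c := \prod_{i=1}^s p_i^{a_i L/b_i}$, a positive rational since $b_i \mid L$ for each $i$. Let $\alp := c^{1/L}$ denote the positive real $L$-th root of $c$, and let $\zet := e(1/L)$, a primitive $L$-th root of unity. The complex roots of $Q(X) = X^L - c$ are then $\alp \zet^k$ for $k = 0, 1, \ldots, L-1$.

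Suppose, for a contradiction, that $Q = fg$ in $\bQ[X]$ with $f$ monic of degree $m$, where $1 \le m \le L-1$. Then $f(X) = \prod_{k \in S}(X - \alp \zet^k)$ for some $S \subseteq \{0, 1, \ldots, L-1\}$ with $|S| = m$, and therefore
\[
f(0) = (-1)^m \alp^m \zet^r, \qquad r := \sum_{k \in S} k.
\]
Since $f(0) \in \bQ \subset \bR$ and $\alp^m > 0$, the factor $\zet^r$ must be real, hence $\zet^r \in \{1,-1\}$, and in particular $\alp^m = |f(0)| \in \bQ_{>0}$.

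Now I would leverage this rationality via unique factorisation. We have
\[
\alp^m = c^{m/L} = \prod_{i=1}^s p_i^{a_i m/b_i} \in \bQ_{>0},
\]
so comparing with the unique prime factorisation of a rational, the exponent $a_i m/b_i$ at each prime $p_i$ must be an integer. As $(a_i, b_i) = 1$, it follows that $b_i \mid m$ for every $i$, and hence $L = [b_1, \ldots, b_s]$ divides $m$. This contradicts $m < L$.

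The main obstacle I anticipate is the passage from $f(0) \in \bQ$ to $\alp^m \in \bQ$: a priori $\zet^r$ might be a non-real root of unity that nevertheless combines with $\alp^m$ to produce a real number. The observation that $f(0)$ is real, together with $\alp^m > 0$, forces $\zet^r \in \{1,-1\}$ and resolves this issue cleanly. Once that point is secured, the remainder of the argument is routine $p$-adic bookkeeping.
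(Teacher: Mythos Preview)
Your proof is correct and follows essentially the same route as the paper's own argument: factor $Q$ over $\bC$, take the constant term of a putative rational factor, observe that the root-of-unity part must be $\pm 1$ so that the product of prime powers is itself rational, and then use unique factorisation together with $(a_i,b_i)=1$ to force $b_i \mid m$ for every $i$, hence $L \mid m$. The only cosmetic difference is that the paper phrases the constant-term step as ``$\prod_{j\in J} z_j \in \bQ$'' rather than computing $f(0)$ explicitly.
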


\begin{proof} Factorising $Q(X)$ over $\bC$ gives
\[
Q(X) = \prod_{j=0}^{L-1} (X-z_j),
\]
where $z_j = p_1^{a_1/b_1} \cdots p_s^{a_s/b_s} e(j/L) \quad (0 \le j \le L-1)$. Assume for a contradiction that $Q(X)$ is reducible over $\bQ$. Then there exists a nonempty proper subset $J$ of $\{0,1,\ldots,L-1\}$ such that the polynomial $\prod_{j \in J} (X-z_j)$ has rational coefficients. In particular,
\[
\prod_{j \in J} z_j \in \bQ.
\]

Let $k = \# J$, where $0 < k < L$. Now
\[
p_1^{a_1 k / b_1} \cdots p_s^{a_s k / b_s} e \Bigl(\sum_{j \in J} j/L \Bigr) \in \bQ.
\]
This forces $e \bigl(\sum_{j \in J} j/L \bigr)$ to equal $\pm 1$. Now $b_i$ divides $k$ $(1 \le i \le s)$, so $L$ divides $k$. This contradiction confirms the irreducibility of $Q(X)$.
\end{proof}

\begin{thm} \label{min2}
Let $a,b \in \bN$ with $(a,b) = 1$. Write
\[
a = p_1^{\alp_1} \cdots p_s^{\alp_s}, \qquad
b = q_1^{\bet_1} \ldots q_t^{\bet_t},
\]
where $s, t \in \bZ_{\ge 0}$, the primes $p_1, \ldots, p_s, q_1, \ldots, q_t$ are distinct, and the $\alp_i, \bet_j$ are positive integers. Put
\[
g = \gcd(b, \alp_1, \ldots, \alp_s, \bet_1, \ldots, \bet_s).
\]
Then the minimal polynomial of $(a/b)^{a/b}$ over $\bZ$ is 
\[
P(X) = b^{a/g} X^{b/g} - a^{a/g}.
\]
\end{thm}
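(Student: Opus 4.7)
The plan is to apply Lemma \ref{min1} to $y := (a/b)^{a/b}$, read off the irreducible polynomial that it furnishes, and clear denominators to match $P(X)$.

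Begin by expanding $y$ as a product of prime powers with rational exponents, using the factorisations of $a$ and $b$:
\[
y = \prod_{i=1}^s p_i^{a\alp_i/b} \cdot \prod_{j=1}^t q_j^{-a\bet_j/b}.
\]
Because $(a,b) = 1$, the fraction $\pm a\alp_i/b$ has reduced denominator $b/(\alp_i,b)$, and similarly $\pm a\bet_j/b$ has reduced denominator $b/(\bet_j,b)$. To invoke Lemma \ref{min1} I must compute the LCM $L$ of these reduced denominators. Since each $(\alp_i, b)$ and $(\bet_j, b)$ divides $b$, the elementary identity $[b/d_1,\ldots,b/d_n] = b/(d_1,\ldots,d_n)$ applies. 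Combined with the gcd identity $((\alp_1,b),\ldots,(\bet_t,b)) = (b,\alp_1,\ldots,\bet_t) = g$, this yields $L = b/g$.

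With $L$ identified, I next compute $y^L$ to pin down the constant of the polynomial $Q$ that Lemma \ref{min1} provides. Using $L/b = 1/g$,
\[
y^L = \prod_i p_i^{a\alp_i/g} \prod_j q_j^{-a\bet_j/g}.
\]
By definition of $g$ we have $g \mid \alp_i$ and $g \mid \bet_j$ for all $i, j$, so $\prod_i p_i^{\alp_i/g}$ and $\prod_j q_j^{\bet_j/g}$ are positive integers whose $g$th powers are $a$ and $b$ respectively. Raising them to the $a$th power produces the integers denoted $a^{a/g}$ and $b^{a/g}$ in the statement of the theorem. Hence $y^L = a^{a/g}/b^{a/g}$, and Lemma \ref{min1} supplies the irreducible-over-$\bQ$ polynomial $Q(X) = X^{b/g} - a^{a/g}/b^{a/g}$. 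Multiplying through by $b^{a/g}$ produces $P(X)$, which is therefore also irreducible over $\bQ$.

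To conclude I check that $P$ is primitive, so that it is the minimal polynomial over $\bZ$. Since $(a,b) = 1$ and $a^{a/g}, b^{a/g}$ share the prime factors of $a$ and $b$ respectively, $(a^{a/g}, b^{a/g}) = 1$, and so $P$ is primitive with positive leading coefficient. The main technical obstacle lies in the second and third paragraphs: confirming $L = b/g$ via the gcd--lcm duality, and verifying that the constant $a^{a/g}/b^{a/g}$ makes sense as a ratio of integers even when the exponent $a/g$ is itself fractional; the latter hinges on the divisibilities $g \mid \alp_i$ and $g \mid \bet_j$ rather than on $g \mid a$.
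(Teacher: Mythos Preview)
Your proof is correct and follows essentially the same route as the paper's: write $y$ as a product of distinct primes to rational exponents, reduce those exponents, apply Lemma~\ref{min1}, and identify the resulting $L$ with $b/g$ via the duality $[b/d_1,\ldots,b/d_n]=b/(d_1,\ldots,d_n)$ for $d_i\mid b$. You are slightly more explicit than the paper in computing $y^L$ and in checking primitivity of $P$, but the underlying argument is identical.
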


\begin{proof} Let $y= (a/b)^{a/b}$. Since $y$ is a root of the integer polynomial $P(X)$, and since $(a,b) = 1$, it remains to show that the degree of $y$ is $b/g$. We compute
\begin{align*}
y &= p_1^{\alp_1 a/b} \cdots p_s^{\alp_s a/b} q_1^{-\beta_1 a/b} \cdots q_t^{-\bet_t a/b} \\
&= p_1^{a_1/b_1} \cdots p_s^{a_s/b_s} q_1^{-c_1 / d_1} \cdots q_t^{-c_t/d_t},
\end{align*}
where
\[
a_i = \frac{\alp_i a}{(\alp_i, b)}, \qquad b_i = \frac b{(\alp_i, b)} \qquad (1 \le i \le s)
\]
and
\[
c_i = \frac{\bet_i a}{(\bet_i, b)}, \qquad d_i = \frac b{(\bet_i, b)} \qquad (1 \le i \le t).
\]
By Lemma \ref{min1}, the degree of $y$ is
\[
[b_1, \ldots, b_s, d_1, \ldots, d_t] = b/g.
\]
For the last equality, note that if $m$ is a common multiple of $b_1, \ldots, b_s, d_1, \ldots, d_t$ then $b/(b,m)$ is a common divisor of $b, \alp_1, \ldots, \alp_s, \bet_1, \ldots, \bet_t$.
\end{proof}

Theorem \ref{min2} tells us that if \eqref{MainEq} has a positive rational solution then the minimal polynomial of $\alp$ over $\bZ$ is given by \eqref{MinShape} for some $r,s,d \in \bN$. From the proof of Theorem \ref{algebraic}, we see that $r^b = a^{ad}$, and that there exists $\lam \in \bN$ such that $\lam^a = s$. We can therefore solve \eqref{MainEq} by trying each positive divisor of $s$ as a possible value for $\lam$, since this would then determine $a$ and $b$. The time taken would be roughly the time needed to prime factorise $s$.

Finally, we have the following analogy to Theorem \ref{algebraic}. 

\begin{thm} \label{PolynomialPower}
Let $P \in \bZ[x]$ be a non-constant polynomial with leading coefficient $A$. Let $a,b \in \bN$ with $(a,b) =1$, and suppose that 
\[
(a/b)^{P(a/b)} \in \bQ.
\]
If $|A| = 1$ then $b = 1$. Otherwise
\begin{equation} \label{bbound3}
b < 3|A| \log_2 |A|.
\end{equation}
\end{thm}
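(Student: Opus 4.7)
My plan is to mirror the proof of Theorem \ref{algebraic}, with a $p$-adic analysis of the denominator of $P(a/b)$ replacing the norm computation, and to close by invoking Lemma \ref{explicit}.

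Writing $P(X) = c_n X^n + \cdots + c_0$ with $c_n = A$, I put
\[
P(a/b) = \frac{c_n a^n + c_{n-1} a^{n-1} b + \cdots + c_0 b^n}{b^n},
\]
and let $M/N$, with $\gcd(M,N) = 1$ and $N > 0$, denote this rational in lowest terms; thus $N = b^n/d$ where $d := \gcd(|\text{numerator}|, b^n)$. If $P(a/b) = 0$, then the rational root theorem yields $b \mid |A|$ and we are done. Otherwise, the assumption $(a/b)^{P(a/b)} \in \bQ$ forces $b^{|M|} = t^N$ for some $t \in \bN$ (exactly as in the proof of Theorem \ref{algebraic}, handling the sign of $M$ by passing to $b/a$ when $M < 0$), and Lemma \ref{GoodLemma} produces $\mu \in \bN$ with $b = \mu^N$. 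In particular, $N \le \log_2 b$ provided $b \ge 2$.

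The crux of the argument is the divisibility $d \mid |A|\, b^{n-1}$, which I would establish prime-by-prime. Primes $p \nmid b$ contribute nothing, since $d \mid b^n$. If $p \mid b$, then $(a,b) = 1$ forces $v_p(a) = 0$, so the leading term $A a^n$ of the numerator has $p$-adic valuation exactly $v_p(A)$, while every other term is divisible by $b$ and has valuation $\ge v_p(b) \ge 1$. In the subcase $v_p(A) < v_p(b)$ the leading term dominates, giving $v_p(d) \le v_p(A)$; in the subcase $v_p(A) \ge v_p(b)$ one uses $v_p(d) \le v_p(b^n) = n v_p(b) \le v_p(A) + (n-1)v_p(b)$. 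Either way $v_p(d) \le v_p(|A|\, b^{n-1})$, hence $N = b^n/d \ge b/|A|$.

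Combining the two chains gives $b \le |A| \log_2 b$. If $|A| = 1$, this forces $b = 1$. If $|A| \ge 2$, rewriting as $b/\log b \le |A|/\log 2$ matches the hypothesis \eqref{bbound1} of Lemma \ref{explicit} with $d$ replaced by $|A|$, so $b < 4 |A| \log |A|$; since $4 \log 2 < 3$, this upgrades to $b < 3 |A| \log_2 |A|$, as desired. I expect the $p$-adic estimate $d \mid |A|\, b^{n-1}$ to be the main obstacle; the remaining steps are routine adaptations of earlier material.
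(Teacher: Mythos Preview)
Your proof is correct and follows essentially the same route as the paper: clear denominators, use Lemma~\ref{GoodLemma} to force $b$ to be a perfect power whose exponent is $b^n$ divided by a gcd, and then bound that gcd in terms of $|A|$ to reach $b/\log_2 b \le |A|$. The only cosmetic differences are that the paper gets the gcd bound in one line via $(b^n, Aa^n+bQ)\le (b,Aa^n+bQ)^n=(b,A)^n\le |A|^n$ rather than your prime-by-prime casework (which yields the slightly weaker but still sufficient $d\mid |A|\,b^{n-1}$), and that the paper redoes the final numerical estimate inline whereas you neatly recycle Lemma~\ref{explicit}.
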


\begin{proof} 
Let $d = \deg P$, and note that $b^d P(a/b) \in \bZ$. For some integer $Q = Q(a,b)$ we have
\begin{equation} \label{id}
b^d P(a/b) = Aa^d + bQ.
\end{equation}
We first suppose that $P(a/b) = 0$. In this case
\[
\frac{a^d}b = -\frac Q A, 
\]
so $b \le |A|$, and the result follows. Henceforth $P(a/b) \ne 0$.

Let $m$ and $n$ be relatively prime positive integers such that 
\[
(a/b)^{P(a/b)} = m/n.
\]
As $(a,b) = (m,n) = 1$, we have
\[
b^{|b^d P(a/b)|} = 
\begin{cases}
n^{b^d}, & \text{if } P(a/b) > 0 \\
m^{b^d}, & \text{if } P(a/b) < 0.
\end{cases}
\]
Recalling \eqref{id} now gives
\[
b^{|Aa^d + bQ| / (b^d, Aa^d + bQ)} = 
\begin{cases}
n^{b^d / (b^d, Aa^d + bQ)}, & \text{if } P(a/b) > 0 \\
m^{b^d / (b^d, Aa^d + bQ)}, & \text{if } P(a/b) < 0,
\end{cases}
\]
so by Lemma \ref{GoodLemma} there exists $\lam \in \bN$ such that
\[
b = \lam^{b^d / (b^d, Aa^d + bQ)}.
\]
Moreover,
\[
(b^d, Aa^d + bQ) \le (b, Aa^d + bQ)^d = (b,A)^d \le |A|^d.
\]

If $|A| = 1$ then $b = \lam^{b^d} \ge \lam^b$, so $\lam = b = 1$. Next consider $|A| \ge 2$. We may assume without loss that $b \ge |A|$. Now $\lam \ge 2$, so
\[
b \ge 2^{(b/|A|)^d} \ge 2^{b/|A|}.
\]
Thus
\begin{equation} \label{bbound4}
\frac b {\log_2 b} \le |A|.
\end{equation}

Recall that $x \mapsto x/ \log x$ is increasing on $[e, \infty)$. Suppose for a contradiction that \eqref{bbound3} is false. Then $b \ge 3|A| \log_2 |A| > e$, so
\[
\frac b {\log_2 b} \ge \frac{3|A| \log_2 |A|} { \log_2 (3|A| \log_2 |A|)}
 = |A| \cdot \frac {\log_2(|A|^3)} {\log_2 (3|A| \log_2 |A|)} > |A|,
\]
contradicting \eqref{bbound4}. This contradiction establishes \eqref{bbound3}.
\end{proof}

\bibliographystyle{amsrefs}
\providecommand{\bysame}{\leavevmode\hbox to3em{\hrulefill}\thinspace}

\end{document}